\newtheorem{theorem}{Theorem}[section]
\newtheorem{lemma}[theorem]{Lemma}
\newtheorem{sublemma}[theorem]{Sublemma}
\newtheorem{corollary}[theorem]{Corollary}
\newtheorem{remark}[theorem]{Remark}
\newenvironment{proof}{\noindent
  \textbf{Proof.}}{\hfill$\Box$\\}
\providecommand{\url}[1]{\texttt{#1}}
\providecommand{\keywords}[1]{\textsc{\textsc{Keywords:}} #1}
\newcommand{\instr}[5]{\ensuremath{\hbox to 60 pt
    {${#1}$\hfil${#2}$\hfil$ \rightarrow
      $\hfil${#3}$\hfil${#4}$\hfil${#5}$}}}
\newcommand{\vp}{\ensuremath{\varphi}}
\newcommand{\nat}{\ensuremath{\mathbb{N}}}
\newcommand{\con}{\wedge}
\newcommand{\imp}{\rightarrow}
\newcommand{\bottom}{\perp}
\newcommand{\falsehood}{\ensuremath{\bot}}
\newcommand{\cm}[1]{\ensuremath{\sf{#1}}}
\newcommand{\mmodel}[1]{\ensuremath{\frak{#1}}}
\newcommand{\sat}[3]{\ensuremath{\frak{#1}, #2 \models #3}}
\newcommand{\notsat}[3]{\ensuremath{\mmodel{#1}, #2 \not\models #3}}
\newcommand{\psmodel}[1]{\ensuremath{\mathcal{M}^*}}
\newcommand{\pseudomodel}[1]{\ensuremath{\mathcal{M}^{**}}}
\newcommand{\sameas}{\ensuremath{\leftrightharpoons}}
\begin{document}

\title{Complexity of finite-variable fragments of propositional modal
  logics of symmetric frames\thanks{Pre-final version of the paper
    published in \emph{Logic Journal of the IGPL}, 27(1), 2019, pp. 60--68,
    DOI \doi{10.1093/jigpal/jzy018}}}

\author[1]{Mikhail Rybakov}

\author[2]{Dmitry Shkatov}

\affil[1]{Tver State University and University of the Witwatersrand,
  Johannesburg, \texttt{m\_rybakov@mail.ru}}

\affil[2]{University of the Witwatersrand, Johannesburg,
  \texttt{shkatov@gmail.com}}
\date{}
\maketitle

\begin{abstract}
  While finite-variable fragments of the propositional modal logic
  {\bf S5}---comp\-lete with respect to reflexive, symmetric, and
  transitive frames---are polynomial-time decidable, the restriction
  to finite-variable formulas for logics of reflexive and transitive
  frames yields fragments that remain ``intractable.''  The role of
  the symmetry condition in this context has not been investigated.
  We show that symmetry either by itself or in combination with
  reflexivity produces logics that behave just like logics of
  reflexive and transitive frames, i.e., their finite-variable
  fragments remain intractable, namely PSPACE-hard.  This raises the
  question of where exactly the borderline lies between modal logics
  whose finite-variable fragments are tractable and the rest.
\end{abstract}

\keywords{propositional modal logic, symmetric frames, finite-variable
  fragments, computational complexity}

\section{Introduction}

While the propositional modal logic {\bf S5}, which has Kripke-style
semantics in terms of reflexive, transitive, and symmetric frames, is
``computationally intractable''---namely, its satisfiability problem
is NP-complete---its $n$-variable fragments, for every $n \in \nat$,
are polynomial-time decidable.  In contrast, as originally shown
in~\cite{Halpern95} and further elaborated in~\cite{ChRyb03} (see
also~\cite{Hem01} and~\cite{Sve03}), most ``natural'' modal logics
whose Kripke frames are reflexive, transitive, or both, remain
intractable even if the number of propositional variables in their
languages is restricted to one (for all logics in the intervals [{\bf
  K}, {\bf GL}] and [{\bf K}, {\bf Grz}]) or zero (for all logics in
the interval [{\bf K}, {\bf K4}]).  It is, thus, interesting to see if
symmetry plays any role in making {\bf S5} stand apart from other
propositional modal logics in this regard.  More generally, does the
addition of the axiom of symmetry to a logic make its finite-variable
fragments easier to decide than the entire logic?  The role of
symmetry in this context has not been investigated in the
literature.

In this paper, we answer this question in the negative by showing that
all logics in the interval [{\bf K}, {\bf KTB}], where {\bf KTB} is
the propositional modal logic of reflexive and symmetric frames, have
PSPACE-hard single-variable fragments.  As a by-product, we prove that
logics {\bf KTB} and {\bf KB}, which is the propositional modal logic
of symmetric frames, can be embedded into their single-variable
fragments, which are, thus, as semantically expressive---from the
point of view of validity and (local) satisfiability---as the entire
logics.

The paper is organized as follows. In
section~\ref{sec:syntax-semantics}, we briefly recall the syntax and
semantics of the logics we consider in the present paper and establish
that all the logics in the interval [{\bf K}, {\bf KTB}] are
PSPACE-hard.  Then, in section~\ref{sec:finite-variable-fragments}, we
present our main results concerning single-variable fragments of
logics in [{\bf K}, {\bf KTB}]. We conclude in
section~\ref{sec:conclusion}.

\section{Preliminaries}
\label{sec:syntax-semantics}

The propositional modal language contains countably many propositional
variables $p_1, p_2, \ldots$, the Boolean constant $\bot$
(``falsehood''), the Boolean connective $\imp$, and the modal
connective $\Box$.  Other connectives, as well as formulas, are
defined as usual.  We also use the following abbreviations:
$$
\begin{array}{rclcrcl}
  \Box^0 \vp &=& \vp, & & \Box^{\leqslant 0} \vp &=& \vp, \\
  \Box^{n+1} \vp &=&  \Box \Box^n \vp, & & \Box^{\leqslant n + 1} \vp &=&
                                                                         \Box^{\leqslant n} \vp \con
                                                                         \Box^{n+1} \vp, \\
  \Box^+ \vp & = & \vp \con \Box \vp, &&
             \Diamond^n \vp &=& \neg \Box^n \neg \vp.
\end{array}
$$
%
A (Kripke) frame is a pair $\frak{F} = \langle W, R \rangle$, where
$W$ is a non-empty set (of worlds) and $R$ is a binary (accessibility)
relation on $W$.  A (Kripke) model is a pair
$\mmodel{M} = \langle \frak{F}, V \rangle$, where $\frak{F}$ is a
frame and $V$ is a valuation function assigning to every propositional
variable a subset of $W$; if \mmodel{M} has the form
$\langle \frak{F}, V \rangle$, we say that it is based on $\frak{F}$.
The satisfaction relation between models \mmodel{M}, worlds $w$, and
formulas $\vp$ is defined as follows: 
\begin{itemize}
\item \sat{M}{w}{p_i} \sameas\ $w \in V(p_i)$;
\nopagebreak[3]
\item \sat{M}{w}{\falsehood} never holds;
\nopagebreak[3]
\item \sat{M}{w}{\vp_1 \imp \vp_2} \sameas\ \sat{M}{w}{\vp_1} implies
  \sat{M}{w}{\vp_2};
\nopagebreak[3]
\item \sat{M}{w}{\Box \vp_1} \sameas\ \sat{M}{w'}{\vp_1} whenever
  $w R w'$.
\end{itemize}
Let $\frak{C}$ be a class of frames. A formula is valid on $\frak{C}$
if it is satisfied at every world of every model based on a frame from
$\frak{C}$. A formula is satisfiable in $\frak{C}$ if it is satisfied
at some world of some model based on a frame from $\frak{C}$.

A propositional modal logic is a set of formulas containing all
classical tautologies as well as the formula
$\Box (p \imp q) \imp (\Box p \imp \Box q)$ and closed under uniform
substitution, modus ponens, and necessitation.  Of particular interest
to us are the logics {\bf K}, which is the set of formulas valid on
all frames; {\bf KB}, which is the set of formulas valid on all frames
whose accessibility relation is symmetric; and {\bf KTB}, which is the
set of formulas valid on all frames whose accessibility relation is
reflexive and symmetric.  More generally, we will be concerned with
the interval [{\bf K, KTB}] of logics $L$ such that
${\bf K} \subseteq L \subseteq {\bf KTB}$.  This interval contains a
number of logics that have been, for various reasons, of interest to
logicians; examples include {\bf T}, which is the set of formulas
valid on reflexive frames; {\bf KB}; {\bf KDB}, which is the set of
formulas valid on symmetric and serial frames; and Hughes's
logic~\cite{Hughes90}.  If a logic $L$ is Kripke-complete, i.e.,
coincides with the set of formulas valid on some class $\frak{C}$ of
frames, we say that a formula $\vp$ is $L$-satisfiable if $\vp$ is
satisfiable in $\frak{C}$.

We say that a logic is PSPACE-hard (PSPACE-complete) if the problem of
membership in it is PSPACE-hard (PSPACE-complete); analogously for
fragments of logics.  In what follows, we rely on the statement as
well as the proof of the following:
\begin{theorem}
  \label{thr:Ladner}
  Let $L$ be a logic such that
  ${\bf K} \subseteq L \subseteq {\bf KTB}$.  Then, $L$ is
  {\rm PSPACE}-hard.
\end{theorem}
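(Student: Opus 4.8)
The plan is to give a polynomial-time reduction from the (PSPACE-complete) problem of evaluating closed quantified Boolean formulas (QBF) that works uniformly across the whole interval. Given a QBF $\Phi = Q_1 p_1 \cdots Q_n p_n\,\psi$ in prenex form, I would construct, in time polynomial in $|\Phi|$, a modal formula $\chi_\Phi$ with two properties: (a) if $\Phi$ is true, then $\chi_\Phi$ is satisfiable on some reflexive and symmetric frame; and (b) if $\Phi$ is false, then $\chi_\Phi$ is satisfiable on no frame whatsoever. Granting (a) and (b), the ``squeeze'' closes at once: from (a), truth of $\Phi$ gives $\neg\chi_\Phi \notin \mathbf{KTB}$, hence $\neg\chi_\Phi \notin L$ since $L \subseteq \mathbf{KTB}$; from (b), falsity of $\Phi$ gives $\neg\chi_\Phi \in \mathbf{K}$, hence $\neg\chi_\Phi \in L$ since $\mathbf{K} \subseteq L$. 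Thus $\Phi$ is true iff $\neg\chi_\Phi \notin L$, which reduces QBF to the complement of membership in $L$; as PSPACE is closed under complement, $L$ is PSPACE-hard.

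The formula $\chi_\Phi$ should force (the relevant generated part of) any model into a layered structure of modal depth $n$, in which a world at level $i$ records a choice of truth values for $p_1, \ldots, p_i$. I would introduce level markers $d_0, \ldots, d_n$ and require, via $\Box^{\leqslant n}$-guarded conjunctions, that exactly one $d_i$ holds at each reachable world, that the root satisfies $d_0$, and that an $R$-step changes the level by at most one. The quantifier prefix is encoded by branching clauses: at a level-$(i-1)$ world, an existential $Q_i = \exists$ demands a forward successor at level $i$ (leaving $p_i$ free), while a universal $Q_i = \forall$ demands two forward successors at level $i$, one satisfying $p_i$ and one satisfying $\neg p_i$. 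Persistence clauses propagate each decided value from the level where it is set down to the leaves, and a final clause requires $\psi$ to hold at every level-$n$ world.

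The essential difficulty, and the point at which the construction departs from the classical treatment of $\mathbf{K}$, $\mathbf{K4}$, $\mathbf{S4}$, is that symmetry (and reflexivity) destroy the one-way navigation a tree model normally provides: from a level-$i$ world, $\Box$ now also sees the parent (level $i-1$) and, under reflexivity, the world itself. I would neutralize this by guarding every downward assertion with the marker of the intended target level, writing the child-directed obligations as $\Box(d_{i+1} \imp \cdots)$ and the witnesses as $\Diamond(d_{i+1} \con \cdots)$, so that constraints meant for descendants cannot be contaminated by the back-edge to the parent or by the reflexive loop. Verifying that this guarding genuinely isolates the forward direction, so that branching and persistence behave as intended no matter which frame we inhabit, is the main obstacle.

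With the guarding in place, the two verifications become routine. For (a), assuming $\Phi$ is true, I would build an explicit finite model on a tree of depth $n$ whose only $R$-edges are the (bidirectional) parent--child edges together with reflexive loops; this frame is reflexive and symmetric by construction, the branching realizes a winning assignment strategy for $\Phi$, and a downward induction shows $\chi_\Phi$ holds at the root. For (b), given any model of $\chi_\Phi$ on an arbitrary frame, the guarded branching and persistence clauses let me read off, level by level, a choice function witnessing that $\Phi$ is true; hence satisfiability on any frame already forces $\Phi$ to be true, and contrapositively falsity of $\Phi$ yields unsatisfiability everywhere.
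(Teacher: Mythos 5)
Your proposal is correct and follows essentially the same route as the paper's own proof: a Ladner-style reduction from TQBF squeezed between $\mathbf{K}$ and $\mathbf{KTB}$ (true $\Rightarrow$ satisfiable on a reflexive--symmetric tree closure, false $\Rightarrow$ unsatisfiable on any frame), with the same key modification of guarding the persistence clauses by the next-level marker (your $\Box(d_{i+1} \imp \cdots)$ is precisely the paper's $\Box(q_{i+1} \imp p_j)$ guard, which the paper notes is the only formula substantively changed from Ladner's). The model construction for the positive direction and the level-by-level read-off for the negative direction also match the paper's argument.
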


\begin{proof}
  The proof is a slight modification of Ladner's proof for logics
  between ${\bf K}$ and ${\bf S4}$ (Theorem 3.1 in~\cite{Ladner77};
  see also~\cite{BdeRV01}, Section 6.7), which proceeds by reduction
  from the set TQBF of true quantified Boolean formulas, known to be
  PSPACE-hard~\cite{SM73}.  Note that as PSPACE is closed under
  complementation, the complement of TQBF is also PSPACE-hard.  Since
  every quantified Boolean formula can be polynomially reduced to one
  in the prenex normal form, we may assume without a loss of
  generality that both TQBF and its complement only contain formulas
  in the prenex normal form.

  First, we define a polynomial-time computable translation $f$ from
  the set of quantified Boolean formulas in the prenex normal form to
  the set of modal formulas such that
  \begin{itemize}
  \item if $\theta \in \rm{TQBF}$, then $f(\theta)$ is {\bf
      KTB}-satisfiable;
  \item if $\theta \notin \rm{TQBF}$, then $f(\theta)$ is not {\bf
      K}-satisfiable.
  \end{itemize}

  \noindent Let
  $\theta = \mathsf{Q}_1 p_1 \ldots \mathsf{Q}_m\, p_m\, \vp( p_1,
  \ldots, p_m)$,
  where $ \mathsf{Q}_1,\ldots,\mathsf{Q}_m \in \{\exists, \forall\}$
  and $\vp( p_1, \ldots, p_m)$ is a propositional formula containing
  no variables other than $p_1, \ldots, p_n$.  Let
  $q_0, q_1, \ldots, q_m$ be propositional variables not in $\theta$.
  Then, $f(\theta)$ is a conjunction of the following formulas:
  \begin{itemize}
  \item $q_0$;
  \item
    $\Box^{\leqslant m} \bigwedge\limits_{i = 0}^m ( q_i \imp
    \bigwedge\limits_{j \ne i} \neg q_j)$;
  \item
    $\Box^{\leqslant m-1} \bigwedge\limits_{\{i\,{:}\,\mathsf{Q}_i =
      \exists \}} ( q_{i-1} \imp \Diamond q_{i})$;
  \item
    $\Box^{\leqslant m-1} \bigwedge\limits_{\{i\,{:}\,\mathsf{Q}_i = \forall \}} (q_{i-1} \imp \Diamond (q_{i}
    \con p_{i}) \con \Diamond (q_{i} \con \neg p_{i}))$;
  \item
    $\Box^{\leqslant m - 1} \bigwedge\limits_{i = 1}^{m-1} ( q_i \imp
    \bigwedge\limits_{j \leqslant i} (p_j \imp \Box (q_{i+1} \imp
    p_j)) \con \bigwedge\limits_{j \leqslant i} (\neg p_j \imp \Box
    (q_{i+1} \imp \neg p_j)))$;
  \item $\Box^{m} (q_m \imp \vp)$.
  \end{itemize}

  \noindent Note that only the second-to-last formula is substantively
  different from the formulas used in~\cite{Ladner77}.  Suppose that
  $\theta$ is true, and thus, there exists a quantifier tree $T$
  witnessing its truth.  We use $T$ to define a {\bf KTB}-model
  satisfying $f(\theta)$.  Let $W$ be the set of nodes of $T$ and $R$
  be the symmetric and reflexive closure of the ``daughter-of''
  relation of $T$.  Thus, $\langle W, R \rangle$ is a {\bf KTB}-frame.
  It remains to define the valuation.  Let $q_i$ be true precisely at
  the nodes of level $i$ (where the root is a node of level $0$), let
  $p_i$ be true at a node of level $j \geqslant i$ if, and only if,
  the substitution of truth values for a variable of $\theta$
  connected to that node, or to the node of level $i$ on the same
  branch of $T$, returns ``true'' for $p_i$, and let $p_i$ be false at
  all nodes of levels $j < i$.  It is then straightforward to check
  that $f(\theta)$ is satisfied at the root of $T$.  That falsehood
  of $\theta$ implies that $f(\theta)$ is not {\bf K}-satisfiable is
  argued exactly as in Ladner's proof~\cite{Ladner77}.

  Now, let $L$ be a logic such that
  ${\bf K} \subseteq L \subseteq {\bf KTB}$.  If
  $\theta \notin \rm{TQBF}$, then $\neg f(\theta) \in {\bf K}$ and,
  hence, $\neg f(\theta) \in L$.  Conversely, if
  $\theta \in \rm{TQBF}$, then $\neg f(\theta) \notin {\bf KTB}$ and,
  hence, $\neg f(\theta) \notin L$.  Thus, the translation
  $t(\theta) = \neg f (\theta)$ reduces the complement of TQBF, which
  is PSPACE-hard, to $L$.  Therefore, $L$ is PSPACE-hard.
\end{proof}


As there exist polynomial-space algorithms for deciding
satisfiability, and thus validity, for {\bf KB}, {\bf KDB}, and {\bf
  KTB} (see, e.g., \cite{Gore99}), these logics are PSPACE-complete.

\section{Complexity of finite-variable fragments }
\label{sec:finite-variable-fragments}

We now show, using a suitable modification of Halpern's
technique~\cite{Halpern95} (see also~\cite{RSh18}), that
single-variable fragments of all logics in the interval [{\bf K, KTB}]
are PSPACE-hard.  In the course of the proof we establish that logics
{\bf KB} and {\bf KTB} can be effectively embedded into their
single-variable fragments.

Let $\vp$ be an arbitrary modal formula.  Assume that $\vp$ only
contains propositional variables $p_1, \ldots, p_n$. First,
recursively define the translation $\cdot'$ as follows:
\begin{center}
  \begin{tabular}{llll}
    ${p_i}'$ & = & $p_i,$ $\mbox{~~where~} i \in \{1, \ldots, n \}$; \\
    $\bottom'$ & = & $\bottom$; & \\
    $(\phi \imp \psi)'$ & = & $\phi' \imp \psi'$; & \\
    $(\Box \phi)'$ & = & $\Box (p_{n+1} \imp \phi')$. &
\end{tabular}
\end{center}
Second, put $$\widehat{\vp} = p_{n+1} \con \vp'. $$

\noindent Notice that $\vp$ is equivalent to
$\widehat{\vp} (p_{n+1} / \top)$ in {\bf K} and, hence, in {\bf KTB}.

\begin{lemma}
  \label{lem:pn+1}
  Let $L \in \{ \bf{K}, \bf{KTB} \}$.  If $\widehat{\vp}$ is
  $L$-satisfiable, then it is satisfiable in a model based on a frame
  for $L$ where $p_{n+1}$ is true at every world.
\end{lemma}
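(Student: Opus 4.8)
The plan is to pass to the submodel induced on the worlds that satisfy $p_{n+1}$. Suppose $\widehat{\vp}$ is $L$-satisfiable; then there are a model $\mmodel{M} = \langle W, R, V \rangle$ based on a frame for $L$ and a world $w_0 \in W$ with $\sat{M}{w_0}{p_{n+1} \con \vp'}$, so in particular $w_0 \in V(p_{n+1})$ and $\sat{M}{w_0}{\vp'}$. I would then set $W' = V(p_{n+1})$, $R' = R \cap (W' \times W')$, and $V'(p_i) = V(p_i) \cap W'$ for every $i$, obtaining $\mmodel{M'} = \langle W', R', V' \rangle$. By construction $w_0 \in W'$, so $W'$ is non-empty, and every world of $\mmodel{M'}$ lies in $V'(p_{n+1})$; that is, $p_{n+1}$ is true everywhere in $\mmodel{M'}$, which is exactly the extra property the lemma demands.

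The heart of the argument is the claim that truth of translated formulas is preserved by this restriction: for every subformula $\psi$ of $\vp$ and every $w \in W'$, one has $\sat{M}{w}{\psi'}$ if and only if $\sat{M'}{w}{\psi'}$. I would prove this by induction on $\psi$. The atomic case follows from $V'(p_i) = V(p_i) \cap W'$, the case of $\bot$ is trivial, and the case of $\imp$ is immediate from the induction hypothesis since $(\phi \imp \chi)' = \phi' \imp \chi'$. The only case that uses the particular shape of the translation is the modal one, where $(\Box \phi)' = \Box (p_{n+1} \imp \phi')$. Evaluating at $w \in W'$, the guard $p_{n+1}$ restricts attention in $\mmodel{M}$ precisely to those $R$-successors of $w$ lying in $V(p_{n+1}) = W'$, i.e. to the $R'$-successors of $w$; and since every world of $\mmodel{M'}$ satisfies $p_{n+1}$, that guard is vacuous in $\mmodel{M'}$. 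Hence both sides quantify over exactly the $R'$-successors of $w$ and require $\phi'$ there, and the induction hypothesis applies to those successors because they lie in $W'$.

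Applying the claim at $w_0$ gives $\sat{M'}{w_0}{\vp'}$, and together with $p_{n+1}$ holding everywhere this yields $\sat{M'}{w_0}{\widehat{\vp}}$. It then remains to check that $\mmodel{M'}$ is based on a frame for $L$, which is where I expect the only genuine (though routine) care to be needed. For $L = {\bf K}$ there is nothing to verify, as every frame is a ${\bf K}$-frame and $W' \ne \emptyset$. For $L = {\bf KTB}$ I would note that reflexivity and symmetry are inherited by induced subframes: if $R$ is reflexive, then $wRw$ for $w \in W'$ gives $wR'w$; and if $R$ is symmetric, then $wR'w'$ unfolds to $wRw'$ with $w,w' \in W'$, whence $w'Rw$ and so $w'R'w$. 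The conceptual point, and the reason the translation is defined exactly so, is the interplay between these two requirements: the relativizing guard $p_{n+1}$ must coincide with the worlds retained in the submodel so that the modal clause is preserved, while the defining frame condition of $L$ must survive passage to the induced subframe. Both hold simultaneously for $L \in \{{\bf K}, {\bf KTB}\}$, which is what makes the restriction legitimate.
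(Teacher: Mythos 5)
Your proposal is correct and follows exactly the paper's own argument: pass to the submodel induced on the worlds satisfying $p_{n+1}$, observe it is non-empty, and verify that it is based on a frame for $L$ and still satisfies $\widehat{\vp}$ at $w_0$. The only difference is that the paper dismisses the last two verifications as ``straightforward to check,'' whereas you spell them out (the induction on subformulas exploiting the guard $p_{n+1}$ in the modal clause, and the preservation of reflexivity and symmetry under induced subframes), which is precisely the intended content of that remark.
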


\begin{proof}
  Suppose that \sat{M}{w_0}{\widehat{\vp}} for some model \mmodel{M}
  and some world $w_0$.  Consider the submodel \mmodel{M'} of
  \mmodel{M} that consists of worlds where $p_{n+1}$ is true.  As
  \sat{M}{w_0}{p_{n+1}}, the set of worlds of \mmodel{M'} is
  non-empty.  It is straightforward to check both that $\mmodel{M}'$
  is based on a frame for $L$ and that \sat{M'}{w_0}{\widehat{\vp}}.
\end{proof}

\begin{lemma}
  \label{lem:varphi-truth}
  Let $L \in \{ \bf{K}, \bf{KTB} \}$. Then, $\varphi$ is
  $L$-satisfiable if, and only if, $\widehat{\vp}$ is $L$-satisfiable.
\end{lemma}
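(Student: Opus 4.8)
The plan is to reduce both directions to a single auxiliary observation: in any model in which $p_{n+1}$ is true at every world, the translation $\cdot'$ is truth-preserving, i.e., $\sat{M}{w}{\phi'}$ iff $\sat{M}{w}{\phi}$ for every subformula $\phi$ of $\vp$ and every world $w$. I would prove this by induction on the structure of $\phi$. The Boolean cases are immediate, since $\cdot'$ commutes with $\bot$ and $\imp$ and fixes each $p_i$. The only case requiring the hypothesis on $p_{n+1}$ is the modal one: because $(\Box \phi)' = \Box (p_{n+1} \imp \phi')$ and $p_{n+1}$ holds at every $R$-successor of $w$, the formula $p_{n+1} \imp \phi'$ is satisfied at such a successor exactly when $\phi'$ is; applying the induction hypothesis to $\phi'$ then yields $\sat{M}{w}{(\Box \phi)'}$ iff $\sat{M}{w}{\Box \phi}$. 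This is the technical heart of the argument, though it is entirely routine.

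For the left-to-right direction, suppose $\vp$ is satisfied at a world $w_0$ of a model \mmodel{M} based on a frame for $L$. I would form \mmodel{M'} by keeping the same frame and the same valuation on $p_1, \ldots, p_n$ but declaring $p_{n+1}$ true at every world. Since the frame is unchanged, \mmodel{M'} is still based on a frame for $L$, and since $p_{n+1}$ does not occur in $\vp$, we still have $\sat{M'}{w_0}{\vp}$. As $p_{n+1}$ is now true everywhere in \mmodel{M'}, the auxiliary observation gives $\sat{M'}{w_0}{\vp'}$; together with $\sat{M'}{w_0}{p_{n+1}}$ this yields $\sat{M'}{w_0}{\widehat{\vp}}$, establishing $L$-satisfiability of $\widehat{\vp}$.

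For the right-to-left direction, suppose $\widehat{\vp}$ is $L$-satisfiable. By Lemma~\ref{lem:pn+1}, it is satisfied at some world $w_0$ of a model \mmodel{M'} based on a frame for $L$ in which $p_{n+1}$ is true at every world. From $\sat{M'}{w_0}{\widehat{\vp}}$ we read off $\sat{M'}{w_0}{\vp'}$, and the auxiliary observation (applicable since $p_{n+1}$ holds everywhere) converts this into $\sat{M'}{w_0}{\vp}$. Hence $\vp$ is $L$-satisfiable.

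I do not anticipate a genuine obstacle: Lemma~\ref{lem:pn+1} already isolates the one subtle point---passing to a model in which $p_{n+1}$ is globally true while remaining within the class of frames for $L$---so the remaining work is the structural induction, whose sole non-trivial clause is the modal one handled above. The only points demanding a little care are checking that inflating the valuation of the fresh variable $p_{n+1}$ does not disturb the frame (immediate, as the frame is untouched) and that $p_{n+1}$ is genuinely fresh, i.e., absent from $\vp$, so that the truth value of $\vp$ is unaffected.
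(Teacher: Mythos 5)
Your proof is correct and takes essentially the same approach as the paper: the forward direction makes $p_{n+1}$ globally true on the unchanged frame, and the converse combines Lemma~\ref{lem:pn+1} with the fact that the translation $\cdot'$ is truth-preserving once $p_{n+1}$ holds everywhere. The only difference is presentational---the paper compresses your explicit induction into the remark that $\vp$ is equivalent to $\widehat{\vp}(p_{n+1}/\top)$ in {\bf K}, while you spell out the inductive argument behind that equivalence.
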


\begin{proof}
  Suppose that $\sat{M}{w_0}{\vp}$.  To obtain a model satisfying
  $\widehat{\vp}$, make $p_{n+1}$ true at every world of \mmodel{M}.
  Conversely, suppose that $\sat{M}{w_0}{\widehat{\vp}}$.  In view of
  Lemma~\ref{lem:pn+1}, we may assume that $p_{n+1}$ is universally
  true in \mmodel{M}.  As $\vp$ is equivalent to
  $\widehat{\vp} (p_{n+1} / \top)$, it follows that
  $\sat{M}{w_0}{\vp}$.
%
\end{proof}
%

Now, consider the following class $\cm{M}$ of finite models.  For
every $k \in \{1, \ldots, n+1 \}$, the class $\cm{M}$ contains a model
$\frak{M}_k$, depicted in Figure~\ref{fig-model-M-k}, that looks as
follows.  For brevity, we call some worlds $p$-worlds; if a world is
not a $p$-world, we call it a $\bar{p}$-world.  The model $\frak{M}_k$
is a chain of worlds whose root, $r_k$, is a $p$-world.  The root is
part of a pattern of worlds, described below, which is succeeded by
three final $p$-worlds.  The pattern looks as follows: a single
$p$-world is followed by $2i + 1$ $\bar{p}$-worlds, for
$1 \leqslant i \leqslant k$.  Thus, the chain looks as follows: the
root (a $p$-world), then three $\bar{p}$-worlds, then a $p$-world,
then five $\bar{p}$-worlds, then a $p$-world, \ldots, then a
$p$-world, then $2k + 1$ $\bar{p}$-worlds, then three $p$-worlds.  The
accessibility relation $R_k$ between the worlds of $\frak{M}_k$ is
both reflexive and symmetric.  To complete the definition of
$\frak{M}_k$, we define the propositional variable $p$ to be true at
exactly the $p$-worlds.

\begin{figure}
\centering
\begin{tikzpicture}[scale=0.68]

\foreach \x in {1,2,3,4,5,6,7,8,9,10,11,12,15,16,17,18}
\draw[thick] (\x,1) circle [radius=2.5pt];

\foreach \x in {1,2,3,4,5,6,7,8,9,10,11,12,14,15,16,17}
\draw [-,  shorten >= 2.5pt, shorten <= 2.5pt] (\x,1) -- (\x+1,1);

\draw (13.5,1) node {$\ldots$};

\draw (1-0.15,0.5) node {$r_k$};
\draw (3-0.15,0.5) node {$c_1^k$};
\draw (8-0.15,0.5) node {$c_2^k$};

\draw (1.75, 1.15) -- (1.75, 1.25) -- (4.25, 1.25) -- (4.25, 1.15);
\draw (5.75, 1.15) -- (5.75, 1.25) -- (10.25, 1.25) -- (10.25, 1.15);
\draw (11.75, 1.15) -- (11.75, 1.25) -- (13.00, 1.25);
\draw (14.00, 1.25) -- (15.25, 1.25) -- (15.25, 1.15);
\foreach \x in {1,5,11,16,17,18}
\draw (\x,1.5) node {$\phantom{\neg}p$};

\foreach \x in {3,8,12.5,14.5}
\draw (\x,1.5) node {$\neg p$};
\end{tikzpicture}
\caption{Model $\frak{M}_k$}
\label{fig-model-M-k}
\end{figure}

Before proceeding, we prove a lemma about the models in \cm{M}. Given
a model $\mmodel{M}_k$, denote by $c_i^k$, for
$i \in \{ 1, \ldots, k \}$, the ``middle'' world of the chain of
$2i + 1$ $\bar{p}$-worlds preceded and succeeded by $p$-worlds; see
Figure~\ref{fig-model-M-k}. Also, let
\begin{center}
  \begin{tabular}{lll}
    $\varepsilon_i$ & = & $\Box^{\leqslant i}\neg p \con \Diamond^{i+1}
                       p$, where $i \in \nat$.
\end{tabular}
\end{center}

\begin{lemma}
  \label{lem:epsilons}
  Let $x$ be a world of $\mmodel{M}_k$ that lies between $r_k$ and
  $c^k_i$, for some $i \leqslant k$ (i.e., $c^k_i$ cannot be reached
  from $r_k$ by consecutive steps along $R_k$ without passing
  through~$x$). Then, $\mmodel{M}_k, x \models \varepsilon_i$ if, and
  only if, $x = c^k_i$.
\end{lemma}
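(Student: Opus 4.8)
The plan is to translate both conjuncts of $\varepsilon_i$ into a single metric condition on $x$ and then read off the answer from the geometry of the chain. The first step is to exploit that $R_k$ is reflexive and symmetric and that $\mmodel{M}_k$ is a chain: writing $d(x,y)$ for the number of edges on the path joining $x$ and $y$, one has $x R_k^j y$ exactly when $d(x,y) \leqslant j$. Indeed, a single $R_k$-step moves at most one position along the chain, so $d(x,y) \leqslant j$ is necessary; and it is sufficient because, reflexivity providing a loop at every world, one can walk the $d(x,y)$ edges toward $y$ and then idle for the remaining $j - d(x,y)$ steps. Consequently $\mmodel{M}_k, x \models \Box^j \neg p$ iff every world within distance $j$ of $x$ is a $\bar{p}$-world, and since larger radii subsume smaller ones, $\Box^{\leqslant i}\neg p$ is equivalent on $\mmodel{M}_k$ to $\Box^i \neg p$. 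Dually, $\mmodel{M}_k, x \models \Diamond^{i+1} p$ iff some $p$-world lies within distance $i+1$ of $x$. Putting the two together, $\mmodel{M}_k, x \models \varepsilon_i$ holds precisely when the $p$-world nearest to $x$ sits at distance exactly $i+1$.

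With this reformulation the lemma becomes a counting statement. For the easy direction I would note that $c_i^k$ is the central world of a block of $2i+1$ consecutive $\bar{p}$-worlds flanked on both sides by $p$-worlds; hence each of the two nearest $p$-worlds is exactly $i+1$ steps away, no $p$-world lies within distance $i$, and $\varepsilon_i$ holds. For the converse I would take a world $x$ lying between $r_k$ and $c_i^k$ with $x \neq c_i^k$ and show that $\Box^{\leqslant i}\neg p$, and hence $\varepsilon_i$, fails, splitting into three cases. If $x$ is a $p$-world, then $\neg p$ already fails at $x$, killing the $\Box^{\leqslant i}\neg p$ conjunct. If $x$ is a $\bar{p}$-world lying in an earlier block, say the block of $2j+1$ $\bar{p}$-worlds with $j < i$, then its nearest $p$-world is at distance at most $j+1 \leqslant i$, so a $p$-world occurs within distance $i$. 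Finally, if $x$ is one of the $\bar{p}$-worlds of block $i$ strictly preceding its centre, then $x$ is at position $\ell \leqslant i$ from the $p$-world immediately preceding that block, so again a $p$-world occurs within distance $i$. In every case $\varepsilon_i$ fails, which together with the easy direction gives the claim.

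The only delicate point is the bookkeeping in the last two cases, i.e., checking that every world of the region between $r_k$ and $c_i^k$ other than $c_i^k$ genuinely has a $p$-world within distance $i$. This hinges on the fact that the blocks grow in size ($2j+1$ for $1 \leqslant j \leqslant i$), so that every earlier block is ``too short'' to place its centre at distance $i+1$ from a $p$-world, and that in block $i$ itself only the exact midpoint $c_i^k$ is equidistant, at distance $i+1$, from both flanking $p$-worlds. Once the metric reformulation of the first paragraph is in hand, this is a routine distance computation along the chain; I expect it, rather than any modal reasoning, to be the one step requiring care.
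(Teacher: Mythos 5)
Your proof is correct: the metric reformulation (using reflexivity to turn ``reachable in exactly $j$ steps'' into ``within chain-distance $j$'', so that $\varepsilon_i$ says the nearest $p$-world is at distance exactly $i+1$) together with the three-case distance bookkeeping is a complete and accurate argument. The paper itself dismisses this lemma with the single word ``Straightforward,'' and your argument is precisely the routine verification it intends, so there is nothing to compare beyond noting that you have filled in the details correctly.
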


\begin{proof}
  Straightforward.
\end{proof}

We now define formulas we use to simulate the propositional variables
of $\widehat{\vp}$.  First, inductively define, for every
$k \in \{1, \ldots, n+1 \}$, the following sequence of formulas:
\begin{center}
  \begin{tabular}{lll}
    $\delta$ & = & $\Box^+ p$; \\
    $\delta_k^k$ & = & $\varepsilon_k \con \Diamond^{k+2}\delta$; \\
    $\delta_{i}^k$ & = & $\varepsilon_i \con
                         \Diamond^{2i+3}\delta^k_{i+1}$, where
                         $1\leqslant i < k$.

\end{tabular}
\end{center}

\noindent Next, let, for every $k \in \{1, \ldots, n+1 \}$,
$$
\alpha_k = p \con \Diamond^2 \delta^k_1
$$
\noindent and $$\beta_k = \neg p \con \Diamond \alpha_k.$$

\noindent Let $\sigma$ be a (substitution) function that, given a formula
$\psi$, replaces all occurrences of $p_i$ in $\psi$ by $\beta_i$,
where $1 \leqslant i \leqslant n + 1$. Finally, define
$$\varphi^* = \sigma(\widehat{\vp})$$
to produce a single-variable formula $\varphi^*$.
\begin{lemma}
  \label{lem:main_lemma}
  Let $L \in \{ \bf{K}, \bf{KTB} \}$. Then, $\varphi$ is
  $L$-satisfiable if, and only if, $\varphi^*$ is $L$-satisfiable.
\end{lemma}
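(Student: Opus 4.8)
The plan is to combine Lemma~\ref{lem:varphi-truth} with the fact that $\varphi^* = \sigma(\widehat{\vp})$, so that it suffices to show that $\widehat{\vp}$ is $L$-satisfiable if, and only if, $\sigma(\widehat{\vp})$ is. The substitution $\sigma$ uniformly replaces each $p_i$ by $\beta_i$, so one direction is a routine application of the substitution lemma. Concretely, suppose $\sat{N}{u_0}{\sigma(\widehat{\vp})}$ for some model $\mmodel{N}$ based on a frame for $L$. I would define a new valuation on the same frame by declaring $p_i$ true at exactly those worlds where $\beta_i$ holds in $\mmodel{N}$, for $1 \leqslant i \leqslant n+1$. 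Since truth is compositional and $\sigma$ merely substitutes $\beta_i$ for $p_i$, an immediate induction on the structure of $\widehat{\vp}$ shows that the resulting model, based on the same $L$-frame, satisfies $\widehat{\vp}$ at $u_0$; by Lemma~\ref{lem:varphi-truth}, $\vp$ is then $L$-satisfiable.

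The substantive direction is the converse. Assume $\widehat{\vp}$ is $L$-satisfiable; by Lemma~\ref{lem:pn+1} I may fix a model $\mmodel{M}$ based on a frame for $L$ with $\sat{M}{w_0}{\widehat{\vp}}$ and $p_{n+1}$ true at every world. I would build a single-variable model $\mmodel{N}$ as follows: retain every world of $\mmodel{M}$, now as a $\bar{p}$-world, keeping the accessibility relation among these original worlds unchanged; then, for each original world $w$ and each index $i \in \{1, \ldots, n+1\}$ with $\sat{M}{w}{p_i}$, attach a fresh, disjoint copy of the gadget $\mmodel{M}_i$ from $\cm{M}$ by adding a single edge from $w$ to its root $r_i$. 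When $L$ is {\bf KTB}, one finally takes the reflexive and symmetric closure; since $\mmodel{M}$ is already a {\bf KTB}-model, this leaves the original-to-original edges intact. The variable $p$ is true in $\mmodel{N}$ exactly at the $p$-worlds of the attached gadgets. The goal is then to prove, by induction on subformulas $\chi$ of $\widehat{\vp}$, that $\sat{N}{w}{\sigma(\chi)}$ if, and only if, $\sat{M}{w}{\chi}$ for every original world $w$, which at $w_0$ yields $\sat{N}{w_0}{\varphi^*}$.

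The crux of this direction, and the main obstacle, is the base case, which rests on a separation property of the gadgets: the root $r_j$ of a copy of $\mmodel{M}_j$ satisfies $\alpha_i$ if, and only if, $i = j$. Granting this, at an original $\bar{p}$-world $w$ the only $p$-successors are the attached roots, so $\beta_i$ (that is, $\neg p \con \Diamond \alpha_i$) holds at $w$ precisely when a copy of $\mmodel{M}_i$ was attached, i.e.\ precisely when $\sat{M}{w}{p_i}$. Establishing the separation property is where the geometry of $\cm{M}$ is used: the nested diamonds in the formulas $\delta^k_i$---namely $\delta^k_i = \varepsilon_i \con \Diamond^{2i+3} \delta^k_{i+1}$ for $i < k$ and $\delta^k_k = \varepsilon_k \con \Diamond^{k+2} \delta$---force any verifying path to march forward through the successive block-midpoints $c^k_1, c^k_2, \ldots, c^k_k$, each pinned down by Lemma~\ref{lem:epsilons}, and finally to reach a $\delta$-world (a $p$-world all of whose neighbours are $p$-worlds, supplied by the terminal triple of $p$-worlds) within the prescribed distance. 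This last step succeeds only when the $i$-th block is the terminal one, forcing $i = j$; for $i > j$ the formula $\varepsilon_i$ is unsatisfiable in $\mmodel{M}_j$ for lack of a sufficiently long $\bar{p}$-block, while for $i < j$ the terminal $\delta$-world lies too far. The delicate point is that $L$ may be reflexive and symmetric, so the diamonds can in principle backtrack; the odd, strictly increasing block lengths $2i+1$ are exactly what rule out such spurious verifying paths, and verifying this carefully---extending Lemma~\ref{lem:epsilons} to the full nested chain and checking that back-edges into the original model or into sibling gadgets never create a stray $\alpha_i$- or $\delta^k_i$-witness---is the technical heart of the argument.

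Finally, the Boolean steps of the induction are routine, and the modal step exploits the relativization built into $\widehat{\vp}$: a subformula $(\Box \rho)' = \Box (p_{n+1} \imp \rho')$ is sent by $\sigma$ to $\Box (\beta_{n+1} \imp \sigma(\rho'))$, whose box, when evaluated at an original world, effectively ranges only over $\beta_{n+1}$-worlds. Since every original world carries a copy of $\mmodel{M}_{n+1}$ (because $p_{n+1}$ is true everywhere in $\mmodel{M}$), while every gadget root is a $p$-world and hence falsifies $\beta_{n+1}$, the $\beta_{n+1}$-successors of any original world are exactly its original $R$-successors; in particular, no relativized path can cross a gadget root into a gadget's interior. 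Thus the relativized boxes in $\sigma(\widehat{\vp})$ simulate the boxes of $\widehat{\vp}$ over the original worlds, and the induction closes. Combining the two directions with Lemma~\ref{lem:varphi-truth} gives the desired equivalence.
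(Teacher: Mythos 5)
Your proposal is correct and follows essentially the same route as the paper's proof: the easy direction via substitution, and the hard direction by attaching the gadget models $\mmodel{M}_k$ to a model of $\widehat{\vp}$ in which $p_{n+1}$ is globally true, pinning down the roots by a separation property proved with the marching argument through the midpoints $c^k_i$ (the paper's Sublemma on $\Diamond \alpha_k$), and closing with an induction on subformulas in which the relativized boxes $\Box(\beta_{n+1} \imp \cdot)$ range only over original worlds. The only differences are cosmetic: the paper attaches one shared copy of each $\mmodel{M}_m$, linked by symmetric edges to every world satisfying $p_m$, rather than fresh disjoint copies per world, and it obtains the easy direction syntactically from closure of $L$ under uniform substitution rather than by re-defining the valuation semantically.
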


\begin{proof}
  Suppose that $\varphi$ is not $L$-satisfiable.  Then, in view of
  Lemma~\ref{lem:varphi-truth}, $\widehat{\vp}$ is not
  $L$-satisfiable; hence, $\neg \widehat{\vp} \in L$.  Since $L$ is
  closed under substitution, $\neg \varphi^* \in L$, and so
  $\varphi^*$ is not $L$-satisfiable.

  Suppose that $\vp$ is $L$-satisfiable.  Then, in view of
  Lemmas~\ref{lem:pn+1} and~\ref{lem:varphi-truth},
  $\sat{M}{w_0}{\widehat{\varphi}}$ for some
  $\mmodel{M} = \langle W, R, V \rangle$, such that
  $\langle W, R \rangle$ is a frame for $L$ and $p_{n+1}$ is true at
  every $w \in W$, and for some $w_0 \in W$.  (Recall that
  $\widehat{\varphi}$ only contains variables $p_1, \ldots, p_{n+1}$.)
  Define model \mmodel{M'} as follows. Attach to \mmodel{M} all the
  models from \cm{M}; then, for every $x$ in $\mmodel{M}$, put
  $x R' r_m$ and $r_m R' x$, where $r_m$ is the root of
  $\mmodel{M}_m \in \cm{M}$, exactly when \sat{M}{x}{p_m}.  Notice
  that $r_{n+1}$ is accessible in \mmodel{M'} from every $x \in W$.
  Finally, make $p$ true at exactly those worlds of the attached
  models where it was true, and make it false at every world in $W$.
  Notice that $\mmodel{M}'$ is based on a frame for $L$.

  To conclude the proof, it suffices to show that
  \sat{M'}{w_0}{\varphi^*}.  To that end, we first prove two auxiliary
  Sublemmas:

  \begin{sublemma}
    \label{sublemma:e}
    Let $x$ be a world of $\mmodel{M}'$ that lies between the root
    $r_k$ of the attached model $\mmodel{M}_k$ and the world $c^k_i$
    of $\mmodel{M}_k$, for some $i \leqslant k$ (i.e., $c^k_i$ cannot
    be reached from $r_k$ by consecutive steps along $R'$ without
    passing through $x$). Then, $\mmodel{M}', x \models \varepsilon_i$
    if, and only if, $x = c^k_i$.
  \end{sublemma}

  \begin{proof}
    Straightforward, using Lemma~\ref{lem:epsilons}.
  \end{proof}

  \begin{sublemma}
    \label{sublem:1}
    Let $x \in W$ and let $\sat{M'}{x}{\Diamond \alpha_k}$.  Then,
    $x R' r_k$.
  \end{sublemma}

  \begin{proof}
    Since $\sat{M'}{x}{\Diamond \alpha_k}$, so $x R' y$ and
    $\sat{M'}{y}{\alpha_k}$, for some $y$ in $\mmodel{M}'$.  We show
    that $y = r_k$.  Since $\sat{M'}{y}{p}$, clearly $y \notin W$, and
    thus $y$ is the root $r_m$ of some $\mmodel{M}_m$. As
    $\sat{M'}{y}{\Diamond^2 \delta^k_1}$, we can reach from $y$ in two
    $R'$-steps a world $y_1$ such that $\sat{M'}{y_1}{\varepsilon_1}$.
    Since $w R' r_{n+1}$ holds for every $w \in W$, and thus
    $\notsat{M'}{w}{\Box \neg p}$ for every $w \in W$, we know that
    $y_1 \notin W$, so $y_1$ belongs to one of the attached models
    $\mmodel{M}_j$.  In two $R'$-steps, we cannot go past $c^j_1$ for
    any $j$ and can only reach $c^j_1$ if $j = m$; hence, due to
    Sublemma~\ref{sublemma:e}, $y_1 = c^m_1$.  Since
    $\sat{M'}{c^m_1}{\Diamond^5( \varepsilon_2 \con \delta^k_2)}$, we
    can reach from $c^m_1$ in five $R'$-steps a world $y_2$ such that
    $\sat{M'}{y_2}{\varepsilon_2}$.  As $\notsat{M'}{w}{\Box \neg p}$
    for every $w \in W$, we know that $y_2 \notin W$.  In five
    $R'$-steps, we cannot go past $c^j_2$ for any $j$ and can only
    reach $c^j_2$ if $j = m$; hence, due to Sublemma~\ref{sublemma:e},
    $y_2 = c^m_2$, and so
    $\sat{M'}{c^m_2}{\Diamond^7( \varepsilon_3 \con \delta^k_3)}$.  We
    can now repeat the argument without worrying about the possibility
    of satisfying further formulas due to the presence in
    $\mmodel{M}'$ of the worlds outside of $\mmodel{M}_m$, as we
    cannot step outside of $\mmodel{M}_m$, starting from $c^m_2$, in
    seven steps.  By inductively repeating the argument $m$ times, we
    arrive at the world $c^m_m$ such that
    $\sat{M'}{c^m_m}{\Diamond^{k+2} \delta}$, which can only happen if
    $m = k$.  Thus, all along we have been evaluating the formulas in
    $\mmodel{M}_k$, and thus $y = r_k$, as required.
  \end{proof}

  Now, we proceed with the proof of the main Lemma.

  Recall that
  $\vp^\ast = \sigma(\widehat{\vp}) = \sigma(p_{n+1} \con \vp') =
  \beta_{n+1} \con \sigma(\vp')$.
  It is easy to check that \sat{M'}{w_0}{\beta_{n+1}}.  It then
  suffices to show that \sat{M}{x}{\psi'} if, and only if,
  \sat{M'}{x}{\sigma (\psi')}, for every subformula $\psi$ of
  $\varphi$ and every $x \in W$.  This can be done by induction on
  $\psi$.

  For the base case, assume that \sat{M'}{x}{\beta_i}; in particular,
  \sat{M'}{x}{\Diamond \alpha_i}. Then, due to
  Sublemma~\ref{sublem:1}, $x R' r_i$, and therefore \sat{M}{x}{p_i}
  by definition of $\mmodel{M}'$.  The other direction is
  straightforward.  The Boolean cases are also straightforward.

  Let $\psi = \Box \chi$. Assume that \notsat{M'}{x}{\Box (\beta_{n+1}
    \imp \sigma(\chi'))}.  Then, $x R' y$, as well as
  \sat{M'}{y}{\beta_{n+1}} and \notsat{M'}{y}{\sigma(\chi')}, for some
  $y$ in $\mmodel{M}'$.  In particular, \sat{M'}{y}{\neg p}; thus, $y$
  cannot be the root of any of the attached models.  Therefore,
  $y \in W$ and the inductive hypothesis is applicable; this gives us
  \notsat{M}{x}{\Box (p_{n+1} \imp \chi')}, as desired. The other
  direction is straightforward, using the converse of
  Sublemma~\ref{sublem:1}.
\end{proof}

Given a formula $\vp$, let
$$
e(\vp) = \neg ((\neg \vp)^\ast).
$$

\begin{theorem}
  \label{thr:main-theorem}
  Let $L \in \{{\bf K}, {\bf KTB}\}$. Then, there exists a
  polynomial-time mapping that embeds $L$ into its single-variable
  fragment.
\end{theorem}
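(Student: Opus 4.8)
The plan is to verify that the mapping $e$ defined just above the statement does the job, relying almost entirely on Lemma~\ref{lem:main_lemma}. There are three things to check: that $e(\vp)$ is a single-variable formula, that $e$ is computable in polynomial time, and that $\vp \in L$ if, and only if, $e(\vp) \in L$.

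First I would observe that $e(\vp)$ contains only the variable $p$. Indeed, $(\neg\vp)^\ast = \sigma(\widehat{\neg\vp})$, and the substitution $\sigma$ replaces every propositional variable $p_i$ by the formula $\beta_i$, which is built from $p$, the Boolean connectives, and the modalities alone. Hence $\sigma(\widehat{\neg\vp})$, and therefore $e(\vp) = \neg((\neg\vp)^\ast)$, is a single-variable formula.

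Next I would bound the size of $e(\vp)$. The translation $\cdot'$ and the operation $\widehat{\cdot}$ each increase the length of a formula only by a constant factor, so $|\widehat{\neg\vp}| = O(|\vp|)$. Unfolding the definitions of $\beta_i$, $\alpha_i$, $\delta^i_j$, $\delta$, and $\varepsilon_j$, one sees that the size of $\beta_i$ is bounded by a polynomial in $i$ (the nested $\Diamond$-chains and the $\Box^{\leqslant j}$-abbreviations contribute only polynomially many modalities). Since every index $i$ occurring in $\widehat{\neg\vp}$ is at most $n+1 = O(|\vp|)$, each $\beta_i$ introduced by $\sigma$ has size polynomial in $|\vp|$; as $\widehat{\neg\vp}$ contains $O(|\vp|)$ variable occurrences, the formula $\sigma(\widehat{\neg\vp})$, and hence $e(\vp)$, has size polynomial in $|\vp|$ and is clearly computable within a polynomial time bound.

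Finally I would establish the equivalence. Both ${\bf K}$ and ${\bf KTB}$ are Kripke-complete, so for each of them a formula belongs to $L$ if, and only if, its negation is not $L$-satisfiable. Applying Lemma~\ref{lem:main_lemma} to the formula $\neg\vp$ yields that $\neg\vp$ is $L$-satisfiable if, and only if, $(\neg\vp)^\ast$ is $L$-satisfiable. Chaining these facts together,
\begin{align*}
  \vp \in L
  &\iff \neg\vp \text{ is not $L$-satisfiable} \\
  &\iff (\neg\vp)^\ast \text{ is not $L$-satisfiable} \\
  &\iff \neg((\neg\vp)^\ast) \in L,
\end{align*}
that is, $\vp \in L$ if, and only if, $e(\vp) \in L$, as required. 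I do not expect any genuine obstacle here: the conceptual content is carried entirely by Lemma~\ref{lem:main_lemma} together with Kripke-completeness, and the only step needing care is the size estimate for $\beta_i$, which is a routine unfolding of the nested $\Diamond$- and $\Box^{\leqslant j}$-abbreviations in its definition.
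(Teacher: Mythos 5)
Your proposal is correct and follows exactly the paper's approach: the paper's entire proof is ``Take the mapping $e$ defined above,'' leaving implicit precisely the three routine verifications you carry out (single-variable output, polynomial size of the $\beta_i$ and hence of $e(\vp)$, and the equivalence chain via Lemma~\ref{lem:main_lemma} together with Kripke-completeness of ${\bf K}$ and ${\bf KTB}$). Your write-up simply makes explicit what the paper takes as immediate, and all the steps check out.
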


\begin{proof}
  Take the mapping $e$ defined above.
\end{proof}

\begin{remark}
  Notice that Lemma~\ref{lem:main_lemma} and
  Theorem~\ref{thr:main-theorem} apply to the logic {\bf KB}, as well.
  We did not mention {\bf KB} in their statements as this is not
  required for the proof of our main result, Theorem~\ref{cor:main}.
\end{remark}

\begin{theorem}
  \label{cor:main}
  Let $L$ be a logic in the interval $[{\bf K}, {\bf KTB}]$.  Then,
  the single-variable fragment of $L$ is {\rm PSPACE}-hard.
\end{theorem}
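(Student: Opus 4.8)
The plan is to mimic the structure of the proof of Theorem~\ref{thr:Ladner}, but to carry it out entirely within the single-variable fragment, by composing Ladner's translation $f$ with the single-variable translation $\cdot^*$ of Lemma~\ref{lem:main_lemma}. Concretely, I would consider the mapping
$$
h(\theta) = \neg\bigl((f(\theta))^*\bigr),
$$
which sends a prenex quantified Boolean formula $\theta$ to a \emph{single-variable} modal formula (single-variable because, by construction, $\sigma$ replaces every $p_i$ by $\beta_i$, and each $\beta_i$ contains only the variable $p$, so negation preserves this property). I would then show that $h$ reduces the complement of TQBF to membership in $L$ restricted to single-variable formulas, i.e., to the single-variable fragment of $L$. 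This is simply the single-variable analogue of the reduction $t(\theta) = \neg f(\theta)$ used in Theorem~\ref{thr:Ladner}, now applied to $(f(\theta))^*$ rather than to $f(\theta)$.

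The correctness of the reduction rests on sandwiching $L$ between the two endpoints ${\bf K}$ and ${\bf KTB}$, using the two satisfiability facts established for $f$ in the proof of Theorem~\ref{thr:Ladner} together with the equi-satisfiability of Lemma~\ref{lem:main_lemma} at \emph{both} endpoints. If $\theta \in \rm{TQBF}$, then $f(\theta)$ is ${\bf KTB}$-satisfiable; by Lemma~\ref{lem:main_lemma} for $L = {\bf KTB}$, so is $(f(\theta))^*$, whence $h(\theta) \notin {\bf KTB}$, and therefore $h(\theta) \notin L$ because $L \subseteq {\bf KTB}$. If $\theta \notin \rm{TQBF}$, then $f(\theta)$ is not ${\bf K}$-satisfiable; by Lemma~\ref{lem:main_lemma} for $L = {\bf K}$, neither is $(f(\theta))^*$, so $h(\theta) \in {\bf K}$, and therefore $h(\theta) \in L$ because ${\bf K} \subseteq L$. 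Thus $\theta \notin \rm{TQBF}$ if, and only if, $h(\theta) \in L$, which is exactly the desired reduction.

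It then remains to check that $h$ is polynomial-time computable. The map $f$ is polynomial-time by the proof of Theorem~\ref{thr:Ladner}; the hat operation is linear; and $\sigma$ replaces each of the polynomially many variable occurrences by the formula $\beta_i$, whose size is polynomial in the number $n$ of variables of $f(\theta)$, which is itself polynomial in $|\theta|$. Hence $|h(\theta)|$ is polynomial in $|\theta|$. Since the complement of TQBF is PSPACE-hard, this yields PSPACE-hardness of the single-variable fragment of $L$.

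I expect the only real subtlety to be conceptual rather than computational: Lemma~\ref{lem:main_lemma} and the embedding $e$ of Theorem~\ref{thr:main-theorem} are established only for the endpoint logics ${\bf K}$ and ${\bf KTB}$ (and ${\bf KB}$), so one cannot simply invoke an embedding of an arbitrary $L$ into its single-variable fragment. The argument must instead thread a single single-variable formula through both endpoints at once---using ${\bf KTB}$-satisfiability to force $h(\theta) \notin L$ and ${\bf K}$-unsatisfiability to force $h(\theta) \in L$---and rely on the inclusions ${\bf K} \subseteq L \subseteq {\bf KTB}$ to transfer (non-)membership to $L$. This is precisely the device already employed in Theorem~\ref{thr:Ladner}, and getting the two directions to line up with the correct endpoint is where care is needed.
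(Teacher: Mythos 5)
Your proposal is correct and takes essentially the same approach as the paper: the paper's reduction is $g(\theta) = e(t(\theta)) = \neg\bigl((\neg\neg f(\theta))^*\bigr)$, which coincides with your $h(\theta)$ up to a double negation, and it performs exactly the same sandwich argument, using ${\bf K}$-unsatisfiability to get membership in $L$ and ${\bf KTB}$-satisfiability to get non-membership. The only cosmetic difference is that you invoke Lemma~\ref{lem:main_lemma} directly at the two endpoints, whereas the paper routes the same facts through the embedding $e$ of Theorem~\ref{thr:main-theorem}.
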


\begin{proof}
  We reduce the PSPACE-hard complement of the set TQBF of true
  quantified Boolean formulas to the single-variable fragment of $L$.
  Let $\theta \notin \rm{TQBF}$; then, $t(\theta) \in {\bf K}$, where
  $t$ is the translation defined in the proof of
  Theorem~\ref{thr:Ladner}; hence, in view of
  Theorem~\ref{thr:main-theorem}, $e( t(\theta)) \in {\bf K}$, and
  thus $ e( t(\theta)) \in L$.  Let, on the other hand,
  $\theta \in \rm{TQBF}$; then, as shown in the proof of
  Theorem~\ref{thr:Ladner}, $t(\theta) \notin {\bf KTB}$; hence, in
  view of Theorem~\ref{thr:main-theorem},
  $e( t(\theta)) \notin {\bf KTB}$, and thus
  $ e( t(\theta)) \notin L$.  Thus, the polynomial-time computable
  translation $g(\theta) = e( t(\theta))$ reduces the complement of
  TQBF to the single-variable fragment of $L$; the statement of the
  Theorem follows.
\end{proof}

Theorem~\ref{cor:main} implies that the single-variable fragments of
all PSPACE-complete logics in $[{\bf K}, {\bf KTB}]$ are {\rm
  PSPACE}-complete; in particular, we have the following:

\begin{corollary}
  The single-variable fragments of {\bf T}, {\bf KB}, {\bf KDB}, and
  {\bf KTB} are {\rm PSPACE}-complete.
\end{corollary}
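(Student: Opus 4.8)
The plan is to reduce the PSPACE-hard complement of $\mathrm{TQBF}$ to the single-variable fragment of $L$ by composing the two translations already constructed, namely Ladner's translation $t$ from Theorem~\ref{thr:Ladner} and the single-variable embedding $e$ from Theorem~\ref{thr:main-theorem}. Concretely, I would set $g(\theta) = e(t(\theta))$ and first observe that $g$ is polynomial-time computable, being the composition of two polynomial-time maps, and that $g(\theta)$ is a single-variable formula, since $e$ outputs single-variable formulas by construction.

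Next I would split into the two cases governed by the behaviour of $t$ recorded in Theorem~\ref{thr:Ladner}. If $\theta \notin \mathrm{TQBF}$, then $t(\theta) \in {\bf K}$; invoking the embedding $e$ at the endpoint logic ${\bf K}$ gives $e(t(\theta)) \in {\bf K}$, and since ${\bf K} \subseteq L$ we conclude $g(\theta) \in L$. Conversely, if $\theta \in \mathrm{TQBF}$, then $t(\theta) \notin {\bf KTB}$; invoking the embedding $e$ at the endpoint logic ${\bf KTB}$ gives $e(t(\theta)) \notin {\bf KTB}$, and since $L \subseteq {\bf KTB}$ we conclude $g(\theta) \notin L$. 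Hence $g(\theta)$ lies in the single-variable fragment of $L$ exactly when $\theta \notin \mathrm{TQBF}$, which is precisely the reduction sought, so the single-variable fragment of $L$ is PSPACE-hard.

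The delicate point---and the reason a single reduction handles every $L$ in the interval rather than only its two endpoints---is that the embedding $e$ has been proved correct only for $L \in \{{\bf K}, {\bf KTB}\}$, so I cannot apply it to an arbitrary intermediate $L$ directly. I therefore use $e$ only anchored at the two extremes: the false case is pinned down from below through ${\bf K} \subseteq L$, and the true case is pinned down from above through $L \subseteq {\bf KTB}$. The containment ${\bf K} \subseteq L \subseteq {\bf KTB}$ then transfers the endpoint membership facts to $L$ itself, and no finer analysis of the intermediate logic $L$ is required. This is exactly what makes the reduction uniform across the whole interval, and recognizing that the endpoint embeddings suffice is the one genuinely non-routine step of the argument.
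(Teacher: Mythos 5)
Your reduction $g(\theta)=e(t(\theta))$ is correct, and it is exactly the argument the paper gives for Theorem~\ref{cor:main}: anchor the embedding $e$ at the two endpoint logics and let the containments ${\bf K}\subseteq L\subseteq{\bf KTB}$ transfer membership (in the false case) and non-membership (in the true case) to every intermediate $L$. The trouble is that what you have proved is PSPACE-\emph{hardness} of the single-variable fragments, whereas the statement you were asked to prove asserts PSPACE-\emph{completeness} of the single-variable fragments of {\bf T}, {\bf KB}, {\bf KDB}, and {\bf KTB}. Your proposal never addresses the upper bound, i.e., membership of these fragments in PSPACE, so it establishes only half of the claim.

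The missing half is short but cannot be omitted. Each of the four logics is decidable in polynomial space: for {\bf KB}, {\bf KDB}, and {\bf KTB} the paper appeals to known polynomial-space algorithms~\cite{Gore99}, and for {\bf T} this is Ladner's classical result~\cite{Ladner77}. Consequently the single-variable fragment of each logic is also in PSPACE: given an input formula, check in polynomial time that it contains at most one propositional variable, and if so run the polynomial-space decision procedure for the full logic. Combining this with the hardness you proved (which does apply to all four logics, since each lies in the interval $[{\bf K},{\bf KTB}]$; note in particular that reflexivity implies seriality, so ${\bf KDB}\subseteq{\bf KTB}$) yields PSPACE-completeness. This is precisely how the paper derives the corollary: Theorem~\ref{cor:main} supplies the hardness, and the observation at the end of Section~\ref{sec:syntax-semantics} that these logics are PSPACE-complete supplies the upper bound.
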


\noindent Note that the PSPACE-completeness of the single-variable
fragment of {\bf T} has been established in~\cite{Halpern95}.

\section{Conclusion}
\label{sec:conclusion}

We have shown that when it comes to their computational properties,
the modal logics of symmetric, as well as of reflexive and symmetric
frames, behave in the same way as the logics of transitive, as well as
of reflexive and transitive, frames---they remain intractable, namely,
PSPACE-hard, when their languages are restricted to only one
propositional variable.

Adding the axiom of symmetry to the logic of reflexive and transitive
frames, i.e., {\bf S4}, is not the only way of arriving at {\bf
  S5},---it can also be obtained, inter alia, by adding the axiom of
Euclideanness, $\neg \Box p \imp \Box \neg \Box p$, to {\bf T}.  The
role of Euclideanness is well understood in the context of the present
inquiry,---it has been shown in~\cite{NTh75} that every extension of
the logic of Euclidean frames, {\bf K5}, is locally tabular;
therefore, any finite-variable fragment of such a logic is
polynomial-time decidable.

\begin{figure}
\centering
\begin{tikzpicture}[scale=1.25]

\coordinate (K)    at (0.00,0.00);
\coordinate (KB)   at (4.20,0.00);
\coordinate (T)    at (0.00,3.75);
\coordinate (KTB)  at (4.20,3.75);
\coordinate (K4)   at (1.30,1.35);
\coordinate (KB5)  at (4.75,1.35);
\coordinate (S4)   at (1.30,4.60);
\coordinate (S5)   at (4.75,4.60);
\coordinate (KD)   at (0.00,2.05);
\coordinate (KDB)  at (4.20,2.05);
\coordinate (KD4)  at (1.30,3.05);
\coordinate (KD45) at (3.25,3.05);
\coordinate (K45)  at (3.25,1.35);
\coordinate (K5)   at (1.90,0.80);
\coordinate (KD5)  at (1.90,2.65);

\draw [-,  shorten >= 1.5pt, shorten <= 1.5pt, color = gray]
(K) -- (KB) -- (KTB) -- (T) -- (K) -- (K4) -- (S4) -- (S5) -- (KB5) -- (K4);
\draw [-,  shorten >= 1.5pt, shorten <= 1.5pt, color = gray]
(KDB) -- (KD) -- (KD4) -- (KD45) -- (K45) -- (K);
\draw [-,  shorten >= 1.5pt, shorten <= 1.5pt, color = gray]
(KD45) -- (S5);
\draw [-,  shorten >= 1.5pt, shorten <= 1.5pt, color = gray]
(T) -- (S4);
\draw [-,  shorten >= 1.5pt, shorten <= 1.5pt, color = gray]
(KTB) -- (S5);
\draw [-,  shorten >= 1.5pt, shorten <= 1.5pt, color = gray]
(KB) -- (KB5);
\draw [-,  shorten >= 1.5pt, shorten <= 1.5pt, color = gray]
(KD) -- (KD45);
\draw [-,  shorten >= 1.5pt, shorten <= 1.5pt, color = gray]
(K5) -- (KD5);

\node [below left ] at (K)    {${\bf K}$}   ;
\node [below right] at (KB)   {${\bf KB}$}  ;
\node [above left ] at (T)    {${\bf T}$}   ;
\node [above right] at (KTB)  {${\bf KTB}$} ;
\node [above left ] at (K4)   {${\bf K4}$}  ;
\node [above right] at (KB5)  {${\bf KB5}$} ;
\node [above left ] at (S4)   {${\bf S4}$}  ;
\node [above right] at (S5)   {${\bf S5}$}  ;
\node [above left ] at (KD)   {${\bf KD}$}  ;
\node [above right] at (KDB)  {${\bf KDB}$} ;
\node [above left ] at (KD4)  {${\bf KD4}$} ;
\node [above right] at (KD45) {${\bf KD45}$};
\node [below right] at (K45)  {${\bf K45}$} ;
\node [below right] at (K5)   {${\bf K5}$}  ;
\node [below right] at (KD5)  {${\bf KD5}$} ;


\shadedraw [shading=ball,ball color = gray] (K)    circle [radius=1.5pt];
\shadedraw [shading=ball,ball color = gray] (KB)   circle [radius=1.5pt];
\shadedraw [shading=ball,ball color = gray] (T)    circle [radius=1.5pt];
\shadedraw [shading=ball,ball color = gray] (S4)   circle [radius=1.5pt];
\shadedraw [shading=ball,ball color = gray] (S5)   circle [radius=1.5pt];
\shadedraw [shading=ball,ball color = gray] (KD4)  circle [radius=1.5pt];
\shadedraw [shading=ball,ball color = gray] (KTB)  circle [radius=1.5pt];
\shadedraw [shading=ball,ball color = gray] (KDB)  circle [radius=1.5pt];
\shadedraw [shading=ball,ball color = gray] (K4)   circle [radius=1.5pt];
\shadedraw [shading=ball,ball color = gray] (K45)  circle [radius=1.5pt];
\shadedraw [shading=ball,ball color = gray] (KD45) circle [radius=1.5pt];
\shadedraw [shading=ball,ball color = gray] (KB5)  circle [radius=1.5pt];
\shadedraw [shading=ball,ball color = gray] (KD5)  circle [radius=1.5pt];
\shadedraw [shading=ball,ball color = gray] (K5)   circle [radius=1.5pt];
\shadedraw [shading=ball,ball color = gray] (KD)   circle [radius=1.5pt];

\end{tikzpicture}
\caption{Cube of modal logics}
\label{fig:cube}
\end{figure}
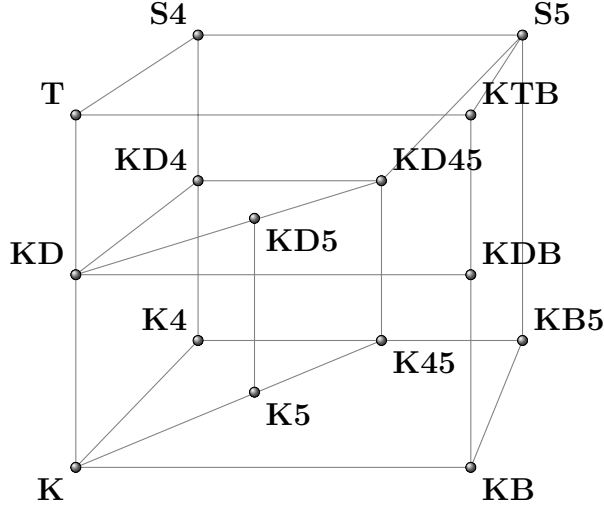

Thus, we have a good understanding of the role played by various
properties of Kripke frames of most interest to ``traditional''
logicians (reflexivity, seriality, symmetry, transitivity, and
Euclideanness)---represented by logics included in the ``cube of modal
logics''~\cite{Garson16}, see Figure~\ref{fig:cube}---in the
computational behaviour of the finite-variable fragments of the
corresponding logics: while Euclideanness---as well as symmetry
combined with transitivity, which imply Euclideanness---make such
fragments ``tractable'', reflexivity, symmetry, transitivity, and
seriality by themselves---as well as transitivity and symmetry
combined either with reflexivity or seriality---do not have this
effect (seriality, along with reflexivity and transitivity, has been
considered in~\cite{ChRyb03}).

This raises the more general question of where the borderline lies
between, on the one hand, logics that behave like those described
in~\cite{Halpern95}, \cite{Hem01}, \cite{Sve03}, \cite{ChRyb03}, and
in this paper, i.e., whose finite-variable fragments remain
intractable, and on the other, those that behave like {\bf S5}, i.e.,
whose finite-variable fragments are simpler than entire logics (in all
the cases known in the literature, this amounts to having
polynomial-time decidable finite-variable fragments).  It is clear
that the answer is not directly linked to the complexity of the logic
in question---as shown in~\cite{ChRyb03}, satisfiability for single-
and two-variable fragments of such logics as {\bf S4.3}, {\bf GL.3},
and {\bf Grz.3}, whose satisfiability problem is \rm{NP}-complete, is
also \rm{NP}-complete. While the borderline between the NP-hard and
the PSPACE-hard in modal logic has received attention in the
literature (see, for example, \cite{HalpernRego07}), this question has
not, as far as we know, been so far addressed.

\end{document}